\newtheorem{theorem}{Theorem}[section]
\newtheorem{lemma}[theorem]{Lemma}
\newtheorem{proposition}[theorem]{Proposition}
\newtheorem{corollary}[theorem]{Corollary}
\theoremstyle{definition}
\newtheorem{definition}[theorem]{Definition}
\newtheorem{example}[theorem]{Example}
\theoremstyle{remark}
\newtheorem{remark}[theorem]{Remark}
\numberwithin{equation}{section}
\begin{document}

\title {ON THE ZARISKI TOPOLOGY of $\Omega$-GROUPS}

\author{R. Lipyanski}
\address{Department of Mathematics, Ben Gurion University, Beer Sheva 84105, Israel}

\curraddr{Department of Mathematics, Ben Gurion University, Beer Sheva 84105, Israel}
\email{lipyansk@math.bgu.ac.il}


\subjclass{Primary 14A22, 16S38; Secondary 07A01, 08B20}

\dedicatory{This article is dedicated to my teacher Prof. B. Plotkin on his 90th anniversary.}

\keywords{Equational domain, variety of $\Omega$-groups, anticommutative $\Omega$-group, the Zariski topology}

\begin{abstract}
A number of geometric properties of $\Omega$-groups from a given variety of $\Omega$-groups can be characterized using the notions of domain and equational domain. An $\Omega$-group $H$ of a variety $\Theta$ is an equational domain in $\Theta$ if the union of algebraic varieties over $H$ is an algebraic variety.  We give necessary and sufficient conditions for an $\Omega$-group $H$ in $\Theta$ to be an equational domain in this variety.
\end{abstract}

\maketitle


Let $F=F(X)$ be a finitely generated by $X$ free $\Omega$-group in a variety of $\Omega$-groups $\Theta$ and $H$ be an $\Omega$-group in $\Theta$. One of the important questions in the algebraic geometry of  varieties of $\Omega$-groups is whether it is possible to equip the space of points ${\rm Hom}(F,H)$ with the Zariski topology, whose closed sets are precisely algebraic sets. If this is so, then the $\Omega$-group $H$ is called the equational domain (or \textit{stable} in the terminology of the authors of the papers \cite {P}, \cite {BPP}) in the variety $\Theta$ (see \cite {BMR}). The same problem arises for a variety $\Theta(G)$ of $\Omega$-groups with the given $\Omega$-group of constants $G$.

  An important role in the study of equational domains in varieties of $\Omega$-groups is played by the notion of domain.
 Necessary and sufficient conditions for linear algebras over an algebra of constants and for groups over a group of constants to be equational domains in terms of domains are given in \cite{BMR}, \cite {BPP}, \cite{DMR} and \cite {P}. Here we continue the study of equational domains in varieties of $\Omega$-groups (without of the $\Omega$-group of constants). We give necessary and sufficient conditions for an $\Omega$-group $H$ in $\Theta$ to be an equational domain in the variety $\Theta$.

 The results presented in this paper were partially announced earlier in \cite {L}.

 \section{Preliminaries}
\subsection {$\Omega$-groups}
 Now we give the basic facts about $\Omega$-groups (see \cite {H}).
 \begin{definition}
 An $\Omega$-group (multioperator group) $G$ is an additive group (not necessarily commutative) with some additional signature $\Omega$, such that for every $\omega\in \Omega$ of the arity $n(\omega)=n>0$, the condition $\underbrace{00\dots0}_{n}\omega=0$ should be fulfilled.
 \end{definition}

 A group is an $\Omega$-group with the empty set of operations $\Omega$; in rings the set $\Omega$ consists of a single multiplication; in Lie algebras over a commutative associative ring $K$ with unit the set $\Omega$ consists of the Lie bracket and all elements of $K$ belonging to the set $\Omega$.

 It is clear that the class of all $\Omega$-groups forms the variety $\Upsilon$. Let $F_\Upsilon=F_\Upsilon(X)$ be a finitely generated by $X$ free $\Omega$-group in the variety $\Upsilon$. We use the functional notion $f(x_1,\dots,x_n), x_i\in X$, for words in $F_\Upsilon(X)$. For brevity we shall use symbols $\bar x,\bar a,$ to denote finite ordered sets $(x_1,x_2,\dots,x_n), (a_1,a_2,\dots,a_m), x_i\in X, a_i\in G$, and write $f(\bar x)$ for $f(x_1,x_2,\dots,x_n)$, $f(\bar a,\bar b)$ for $f(a_1,a_2,\dots,a_m, b_1,\dots, b_r)$. If $\bar x, \bar y$ are ordered sets $(x_1,x_2,\dots,x_n), (y_1,y_2,\dots,y_n)$ with the same number of elements, we shall denote the set
  $x_1+y_1,x_2+y_2,\dots,x_n+y_n$ by $\bar x +\bar y$. We shall also write $\bar a\in G$ when we mean $a_1,\dots, a_m\in G$.

  Let $f(\bar x, \bar y)$ be a word in two disjoint sets $X$ and $Y$ of variables $\bar x$ and $\bar y$, respectively.
  We shall say that the word $f(\bar x, \bar y)$ is a \textit{commutator word in $\bar x$ and $\bar y$} if $f(\bar x, \bar 0)=f(\bar 0, \bar y)=0$. The set of all such words will be denoted by $[X,Y]$.

  Let $A$ and $B$ be two subsets of an $\Omega$-group $G$. The set of all elements $f(\bar a,\bar b)$ , where
 $f(\bar x, \bar y)\in [X,Y]$ is called the \textit{commutator group of $A$ and $B$} and is denoted by $[A,B]$. Note that if $A$ and $B$ are $\Omega$-subgroups, then $[A,B]$ is also an $\Omega$-subgroup of $G$.

 \begin{lemma}[\cite{H}]\label{com}
 If $[A,B]=0$, then
 $$
f(\bar a,\bar b)=f(\bar a,\bar 0)+f(\bar 0,\bar b)=f(\bar 0,\bar b)+f(\bar a,\bar 0)
 $$
 for all words $f(\bar x, \bar y)$  and all $\bar a\in A$ and $\bar b\in B$.
 \end{lemma}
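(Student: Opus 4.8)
The plan is to reduce the statement to the defining property of commutator words, using only the elementary fact that every word vanishes when all of its arguments are set to $0$. First I would record this fact: for every word $f(\bar x,\bar y)$ one has $f(\bar 0,\bar 0)=0$. This follows by a routine induction on the construction of $f$ from variables --- it is clear for a single variable, it is inherited through the group operations $+$ and $-$ (as $\{0\}$ is closed under them), and it is inherited through every $\omega\in\Omega$ precisely because of the axiom $\underbrace{00\dots0}_{n}\omega=0$; equivalently, $f(\bar 0,\bar 0)$ is the image of $f$ under the homomorphism sending all generators to $0$, and that image is the trivial $\Omega$-subgroup $\{0\}$.

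Next, for an arbitrary word $f(\bar x,\bar y)$ I would introduce the two auxiliary words
\[
 g_1(\bar x,\bar y)=-f(\bar 0,\bar y)-f(\bar x,\bar 0)+f(\bar x,\bar y),
 \qquad
 g_2(\bar x,\bar y)=-f(\bar x,\bar 0)-f(\bar 0,\bar y)+f(\bar x,\bar y),
\]
and check that both belong to $[X,Y]$. Substituting $\bar y=\bar 0$ in $g_1$ gives $-f(\bar 0,\bar 0)-f(\bar x,\bar 0)+f(\bar x,\bar 0)$, which is $0$ by the previous paragraph, and substituting $\bar x=\bar 0$ gives $-f(\bar 0,\bar y)-f(\bar 0,\bar 0)+f(\bar 0,\bar y)=0$; the verification for $g_2$ is entirely analogous. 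Hence $g_1,g_2\in[X,Y]$.

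Now the hypothesis $[A,B]=0$ means exactly that $h(\bar a,\bar b)=0$ for every $h\in[X,Y]$, every $\bar a\in A$ and every $\bar b\in B$. Applying this to $h=g_1$ gives $-f(\bar 0,\bar b)-f(\bar a,\bar 0)+f(\bar a,\bar b)=0$, that is, $f(\bar a,\bar b)=f(\bar a,\bar 0)+f(\bar 0,\bar b)$; applying it to $h=g_2$ gives $f(\bar a,\bar b)=f(\bar 0,\bar b)+f(\bar a,\bar 0)$. Combining the two yields both equalities of the lemma (and, as a byproduct, shows that $f(\bar a,\bar 0)$ and $f(\bar 0,\bar b)$ commute in the additive group).

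I do not expect any real obstacle. The only points that need attention are bookkeeping in the non-commutative additive notation --- one must keep the three summands of $g_1$ and $g_2$ in exactly the indicated order and resist rearranging them --- and the base fact $f(\bar 0,\bar 0)=0$, which is the one place where the $\Omega$-group axiom $\underbrace{00\dots0}_{n}\omega=0$ is genuinely used. One could instead deduce the second equality from the first by the symmetry $[A,B]=[B,A]$, but introducing $g_2$ directly is shorter.
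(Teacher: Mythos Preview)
Your argument is correct. The paper itself does not supply a proof of this lemma; it is simply stated and attributed to Higgins~\cite{H}, so there is nothing in the paper to compare against. Your approach --- forming the ``defect'' words $g_1,g_2$, checking that they are commutator words via the identity $f(\bar 0,\bar 0)=0$, and then invoking the hypothesis $[A,B]=0$ --- is exactly the natural one and matches the standard treatment in \cite{H}.
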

  Let $A$ and $B$ be defined as above and $\bar a=(a_1,a_2,\dots,a_n), a_i\in A$ and $\bar b=(b_1,b_2,\dots,b_n), b_i\in B$. Then the element
 $$
 [\bar a;\bar b;\omega]=-\bar a\omega-\bar b\omega+(\bar a+\bar b)\omega, \omega\in \Omega
 $$
 is called the \textit{$\omega$-commutator of $\bar a$ and $\bar b$}.
 \begin{definition}
 Let $G$ be an $\Omega$-group. A subset $U$of $G$ is called an ideal in $G$  if the following conditions are fulfilled:
 \begin{enumerate}
\item $U$ is closed with respect to all $\omega\in \Omega$.
\item $U$ is a normal subgroup in additive group $G$.
\item The $\omega$-commutator $[\bar a;\bar b;\omega]$ belongs to $U$ if $\bar a\in U$, $\omega\in \Omega$ and $\bar b\in G$.
\end{enumerate}
 \end{definition}
 Let $A$ and $B$  be two $\Omega$-subgroups of $G$ and let $\{A,B\}$ be an $\Omega$-subgroup of $G$ generated by $A$ and $B$.  The commutator group $[A,B]$ can be characterized as follows.
  \begin{proposition}[\cite{H}]
 The commutator group $[A,B]$ is the ideal in $\{A,B\}$ generated by all commutators of the kind $[a,b], a\in A, b\in B,$ and all $\omega$-commutators $[\bar a;\bar b;\omega]$, where $\bar a\in A$ and $\bar b\in B, \omega\in\Omega$.
 \end{proposition}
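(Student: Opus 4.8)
The plan is to establish the two inclusions separately; write $I$ for the ideal of $\{A,B\}$ generated by the commutators $[a,b]$ ($a\in A$, $b\in B$) and the $\omega$-commutators $[\bar a;\bar b;\omega]$ ($\bar a$ a tuple from $A$, $\bar b$ a tuple from $B$, $\omega\in\Omega$). Note first that $[A,B]\subseteq\{A,B\}$, since every $f(\bar a,\bar b)$ is an $\Omega$-word in elements of $A\cup B$, so ``ideal of $\{A,B\}$'' is the right setting. To get $I\subseteq[A,B]$ I would first observe that each generator of $I$ lies in $[A,B]$: the words $-x-y+x+y$ and $-\bar x\omega-\bar y\omega+(\bar x+\bar y)\omega$ vanish when all $X$-variables, or all $Y$-variables, are set to $0$ (using the axiom $0\cdots0\,\omega=0$), hence lie in $[X,Y]$. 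Then I would check that $[A,B]$ is itself an ideal of $\{A,B\}$, using the closure of the class $[X,Y]$ of commutator words under the relevant constructions: $f,g\in[X,Y]\Rightarrow f-g\in[X,Y]$; $f_1,\dots,f_n\in[X,Y]$ and $\omega$ of arity $n$ $\Rightarrow(f_1,\dots,f_n)\omega\in[X,Y]$; and for an arbitrary word $w$, both $-w+f+w$ and $-(\bar f)\omega-(\bar w)\omega+(\bar f+\bar w)\omega$ lie in $[X,Y]$ whenever $\bar f$ is a tuple of commutator words and $\bar w$ a tuple of arbitrary words (substituting $0$ for all $Y$-variables, resp.\ all $X$-variables, annihilates the $f$-parts and the surviving $w$-terms cancel). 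Since $[A,B]$ is then an ideal of $\{A,B\}$ containing every generator of $I$, minimality of $I$ yields $I\subseteq[A,B]$.

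For the reverse inclusion I would pass to the quotient $\Omega$-group $Q=\{A,B\}/I$ and prove, by structural induction on the word $f(\bar x,\bar y)\in F_\Upsilon(X\cup Y)$, the ``modulo $I$'' version of Lemma \ref{com}:
$$f(\bar a,\bar b)\equiv f(\bar a,\bar 0)+f(\bar 0,\bar b)\pmod I\qquad(\bar a\in A,\ \bar b\in B).$$
Applied to $f\in[X,Y]$ this gives $f(\bar a,\bar b)\equiv 0\pmod I$, i.e.\ $[A,B]\subseteq I$. The base case ($f$ a single variable) is immediate. For $f=g+h$ and $f=-g$ the inductive hypothesis reduces everything to rearranging terms of the form $g(\bar a,\bar 0)\in A$ past terms $h(\bar 0,\bar b)\in B$ (and their negatives), which is legitimate because $[c,d]\in I$ for $c\in A$, $d\in B$ and $I$ is normal in $\{A,B\}$, so such elements commute in $Q$. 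For $f=(g_1,\dots,g_n)\omega$, since the quotient map is an $\Omega$-homomorphism the inductive hypothesis gives $f(\bar a,\bar b)\equiv(\bar p+\bar q)\omega\pmod I$ with $\bar p=(g_i(\bar a,\bar 0))_i$ a tuple from $A$ and $\bar q=(g_i(\bar 0,\bar b))_i$ a tuple from $B$; the identity $(\bar p+\bar q)\omega=\bar q\omega+\bar p\omega+[\bar p;\bar q;\omega]$, together with $[\bar p;\bar q;\omega]\in I$ and the fact that $\bar p\omega=f(\bar a,\bar 0)\in A$ commutes in $Q$ with $\bar q\omega=f(\bar 0,\bar b)\in B$, then yields the desired congruence.

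The step I expect to require the most care is the inductive case for the $\Omega$-operations: the essential point is to recognize that the ``defect'' $(\bar p+\bar q)\omega-\bar p\omega-\bar q\omega$ is precisely an $\omega$-commutator of a tuple from $A$ and a tuple from $B$, hence one of the chosen generators of $I$. The additive cases and the verification that $[A,B]$ is an ideal are then essentially bookkeeping, relying only on the commutators $[a,b]$, the $\Omega$-closure of $[X,Y]$, and normality of $I$. A recurring small check, used freely above, is that a word evaluated on elements of $A$ with its $Y$-variables set to $0$ lies in the $\Omega$-subgroup $A$ (and dually for $B$), which holds because $A$ and $B$ are $\Omega$-subgroups.
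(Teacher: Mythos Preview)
The paper does not actually give a proof of this proposition: it is stated with a citation to Higgins \cite{H} and is used as a tool elsewhere in the paper. So there is no ``paper's own proof'' to compare against.

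That said, your argument is correct and is essentially the standard one (and, as far as I can tell, the one Higgins gives). The two inclusions are handled in the natural way: for $I\subseteq[A,B]$ you verify that the generators of $I$ are values of commutator words and that $[A,B]$ is itself an ideal of $\{A,B\}$, using the closure of the class $[X,Y]$ under the operations needed for ``ideal''; for $[A,B]\subseteq I$ you prove the congruence $f(\bar a,\bar b)\equiv f(\bar a,\bar 0)+f(\bar 0,\bar b)\pmod I$ by structural induction on $f$, which is exactly the mod-$I$ strengthening of Lemma~\ref{com}, and then specialize to $f\in[X,Y]$. The one place where a reader might hesitate---the induction step for an operation $\omega$---you identify correctly: the obstruction $(\bar p+\bar q)\omega-\bar q\omega-\bar p\omega$ is precisely the generator $[\bar p;\bar q;\omega]$ of $I$, and the residual swap of $\bar p\omega\in A$ past $\bar q\omega\in B$ is handled by the ordinary commutators $[a,b]\in I$. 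Two small points you might make explicit when writing it out: (i) that $[b,a]=-[a,b]\in I$, so elements of $A$ and $B$ commute in $Q$ regardless of order; and (ii) that every element of $\{A,B\}$ is of the form $w(\bar a,\bar b)$ for some word $w$, so the ``word-level'' closure properties of $[X,Y]$ really do yield the ideal conditions for $[A,B]$ in $\{A,B\}$.
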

   \begin{example}
 Let $R$ be an associative ring and $U_1, U_2 $ subrings in $R$. Then it is easy to show that
 $$
 [U_1,U_2]=U_1 U_2+U_1 U_2
  $$
  In the case of Lie algebras (groups) we have the ordinary commutator subalgebra (commutator subgroup).
 \end{example}
\begin{definition}
An $\Omega$-group $G$ is called abelian if $[G,G]=0$.
 \end{definition}
Groups and Lie algebras are abelian in the usual sense, while for associative rings this notion means that the product of any two elements is zero.

Now we turn to the  property of anticommutativity for $\Omega$-groups.
\begin{definition}[\cite{BPP}]
An $\Omega$-group  $L$ is called anticommutative (or antiabelian in the terminology of the authors in \cite {BMR})  if the following conditions are fullfilled:
\begin{enumerate}
\item $L$ has no nontrivial abelian ideal.
\item  Every two nontrivial ideals $H_1$ and $H_2$ in $L$ have a nontrivial intersection.
\end{enumerate}
\end{definition}
A number of interesting properties of anticommutativity are given for groups in \cite{BMR} and for $\Omega$-groups in \cite{BPP}, \cite{P}. It is known that every non-abelian free group, free associative algebra, and non-abelian free Lie algebra are anticommutative (see \cite{BMR}, \cite{BPP}, \cite{L}).
\subsection {Domains}
Now we consider the notion of zero divisors in an $\Omega$-group $H$. For each $a\in H$, denote by $\rm{id}{\langle a \rangle}$ the ideal in $H$ generated by $a$ and $ {\langle a \rangle} $ is an $\Omega $-subgroup of $H$ generated by $a$. Let $P$ be an $\Omega$- subgroup of $H$. We denote by $\rm{id}_P{\langle a \rangle}$  the ideal in $P$ generated by $a$. In our notation, we have
 $\rm{id}_H{\langle a \rangle}=\rm{id}{\langle a \rangle}$.
\begin{definition}[\cite{P}]\label{dv}
A non-zero element $a\in H$ is called a zero divisor if for some non-zero element $b\in H$ we have
\begin{equation}
[\rm{id}{\langle a \rangle },\rm{id}{{\langle b \rangle}}]=0
\end{equation}
The  $\Omega$-group $G$ is called a \textit{domain} if $G$ is without zero divisors, i.e., for any two elements $g_1$ and $g_2$ of $G$ the following holds:
$$
[\rm{id}{\langle g_1 \rangle },\rm{id}{{\langle g_2 \rangle}}]=0\Rightarrow g_1=0\;\; \mbox{or}\;\; g_2=0,
$$
\end{definition}
\begin{example}
Let $R$ be an associative ring. In this case Definition \ref{dv} looks as follows: a non-trivial element $a$ in $A$-associative ring $R$ is a zero divisor  if there exists a non-trivial element $b\in R$ such that
$$
\rm{id}{{\langle a \rangle }}\cdot\rm{id}{{\langle b \rangle}}=\rm{id}{{\langle b \rangle }}\cdot\rm{id}{{\langle a \rangle}}=0
$$
Let $L$ be a Lie algebra over an associative commutative ring $K$ with unit. A non-trivial element $a$ in $L$ is a zero-divisor if for some non-zero element $b\in L$ is fulfilled
 $$
 [\rm{id}{{\langle a \rangle }},\rm{id}{{\langle b \rangle}}]=0,
 $$
 where $[,]$ is the Lie bracket.

Let $H$ be a group. A non-trivial element $a$ in $H$ is a zero divisor if for some non-trivial element $b\in H$ we have
\begin{equation}\label{gr}
[g_{1}^{-1}ag_{1},g_{2}^{-1}b g_2]=1
\end{equation}
for all $g_1,g_2\in H$. Here $[,]$ is the usual commutator brackets in the group $H$.
\end{example}
\begin{remark}\label{rem_1}
In \cite{BMR}) it was proved that the condition (\ref{gr}) is equivalent to the following: a non-trivial element $a$ in $H$ is a zero divisor if for some non-trivial element $b$ in $H$ we have
 \begin{equation}\label{gr_2}
[g^{-1}ag,b ]=1
\end{equation}
for all $g\in H$.
\end{remark}

\subsection {Algebraic varieties over groups}
Let $\Theta$ be a variety of $\Omega$-groups and $F=F(X)$ be a finitely generated by $X$ free group in $\Theta$. Consider an $\Omega$-group $H$ in $\Theta$. Any formula $w\equiv w^{\prime}, w, w^{\prime}\in F(X)$ can be treated  as an equation. Denote it as $w=w^{\prime}$. Every solution of this equation in $H$ is a homomorphism $\mu:F(X)\rightarrow H$ such that $w^{\mu}=w^{\prime\mu}$. It is possible to define a Galois correspondence $'$ between subsets in ${\rm Hom}(F(X),H)$ and subsets in $F(X)$. For a subset $T$ in $F(X)$ define $T'$, \textit{$H$-closure of $T$}
$$
T'=\{\mu\in{\rm Hom}(F(X),H)\;|\;T\subseteq {\rm Ker}\mu \}
$$
 On the other hand, for any subset $A\subseteq {\rm Hom}(F(X),H)$ define a set $A'$ in $F(X)$, \textit{$H$-closure of $A$}
 $$
 A'=\bigcap\limits_{\mu\in A} {\rm Ker}\mu
 $$
The set $A'$ is an ideal in $F(X)$.
 \begin{definition}
A subset $A\subseteq {\rm Hom}(F(X),H)$ is called an affine algebraic variety over $H$ if there exists a set $T$ in $F(X)$) such that $T'=A$.
\end{definition}
 If $A$ is an algebraic variety, then $A'$ is called the ideal in $F(X)$ corresponding to $A$.

  The intersection $A\cap B$ of algebraic varieties $A$ and $B$ is also an algebraic variety. The union $A\cup B$ of algebraic varieties is not necessarily an algebraic variety. If $A=T_1'$ and  $B=T_2'$, then $A\cup B\subseteq (T_1\cap T_2)'$.

 \begin{definition}[\cite {P}, \cite{DMR}]
 An $\Omega $-group $H$  is called an equational domain in  $\Theta$  if for any free $\Omega$-group $ F(X)$  and any two
 algebraic varieties $A$ and $B$ in the space ${\rm Hom}(F(X),H)$ the union $A\cup B$ is also an algebraic variety.
 \end{definition}

Equational domains play an important role in the theory of algebraic varieties. Following \cite{BPP}, denote by ${\rm Alv}_H(F)$ the set of all algebraic varieties in ${\rm Hom}(F,H)$. The set ${\rm Alv}_H(F)$ can be considered as a lattice, where the union $A\vee B$ is defined by
 $$
 A\vee B=(A\cup B)'
 $$
Denote by ${\rm Cl}_H(F)$ the set of all $H$-closed congruences in $F$. Lattice operations can be defined in a similar way in the set ${\rm Cl}_H(F)$. The lattices ${\rm Cl}_H(F)$ and ${\rm Alv}_H(F)$ are antiisomorphic. It is clear that if a $\Omega$-group $H$ is an equational domain, then the lattices ${\rm Alv}_H(F)$ and ${\rm Cl}_H(F)$ are distributive.

\section {Equational domains in a variety of $\Omega$-groups}
As before, let $\Theta$ be a variety of $\Omega$-groups.
\begin{theorem}\label{main}
An $\Omega$-group $H$ in $\Theta$ is a domain if and only if $H$ is an equational domain in $\Theta$.
\end{theorem}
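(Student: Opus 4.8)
I would prove the two implications separately; the routine one is that an equational domain is a domain, and the work lies in the converse.

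\emph{Equational domain $\Rightarrow$ domain.} Argue by contraposition: if $H$ is not a domain, choose non-zero $a,b\in H$ with $[\mathrm{id}\langle a\rangle,\mathrm{id}\langle b\rangle]=0$, work in $F=F(x,y)$, and consider the algebraic sets $A=\{x\}'$ and $B=\{y\}'$. Were $A\cup B$ algebraic it would equal $(A\cup B)''=(\{x\}''\cap\{y\}'')'$. But the homomorphism $\mu_0\colon x\mapsto a,\ y\mapsto b$ kills every $w\in\{x\}''\cap\{y\}''$: by Lemma~\ref{com}, applied to the ideals $\mathrm{id}\langle a\rangle$ and $\mathrm{id}\langle b\rangle$ (whose commutator vanishes), $w(a,b)=w(a,0)+w(0,b)$, where $w(a,0)=0$ because $w\in\{y\}''$ is killed by $x\mapsto a,\ y\mapsto 0$, and $w(0,b)=0$ because $w\in\{x\}''$ is killed by $x\mapsto 0,\ y\mapsto b$. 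Hence $\mu_0\in A\cup B$, contradicting $\mu_0(x)=a\neq0$ and $\mu_0(y)=b\neq0$; so $H$ is not an equational domain.

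\emph{Domain $\Rightarrow$ equational domain.} Let $F=F(X)$ and let $A,B$ be algebraic, with $T_1=A'$ and $T_2=B'$; these are $H$-closed ideals and $A=T_1'$, $B=T_2'$. I would guess that $A\cup B=S'$, where $S=[T_1,T_2]$ is the commutator group of the two subsets, i.e.\ the ideal generated by all commutators $[u,v]$ and $\omega$-commutators $[\bar u;\bar v;\omega]$ with $u\in T_1$ and $v\in T_2$. Since $S\subseteq T_1\cap T_2$, every homomorphism lying in $A$ or in $B$ kills $S$, so $A\cup B\subseteq S'$; the content is the reverse inclusion. Take $\mu\in S'$ and suppose $\mu\notin A$ and $\mu\notin B$, so there are $u\in T_1$, $v\in T_2$ with $p:=\mu(u)\neq0$ and $q:=\mu(v)\neq0$. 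Because $T_1,T_2$ are ideals, $[\mathrm{id}_F\langle u\rangle,\mathrm{id}_F\langle v\rangle]\subseteq S\subseteq\mathrm{Ker}\,\mu$, and pushing this relation through $\mu$ should force $[\mathrm{id}\langle p\rangle,\mathrm{id}\langle q\rangle]=0$ in $H$ --- contradicting that $H$ is a domain, since $p\neq0$ and $q\neq0$.

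The step I expect to be the main obstacle is this last deduction. Applying $\mu$ to $[\mathrm{id}_F\langle u\rangle,\mathrm{id}_F\langle v\rangle]$ directly annihilates only the commutator of the ideals generated by $p$ and $q$ \emph{inside the image} $\mu(F)$, whereas the domain hypothesis concerns the ideals generated by $p$ and $q$ in all of $H$; closing this gap is the heart of the proof. The tool I would use is again Lemma~\ref{com}: an arbitrary element of $[\mathrm{id}\langle p\rangle,\mathrm{id}\langle q\rangle]$ is of the form $f(\bar P,\bar Q)$ with $\bar P\in\mathrm{id}\langle p\rangle$, $\bar Q\in\mathrm{id}\langle q\rangle$, and the Lemma rewrites it as $f(\bar P,\bar 0)+f(\bar 0,\bar Q)$, so its vanishing reduces to vanishing of words already controlled on the ``axes'' $\{x\}''$, $\{y\}''$ once $\bar P$ and $\bar Q$ are expressed through $u$, $v$ and auxiliary variables --- which is why one should be free to take $F(X)$ with enough generators. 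Equivalently, one can first reduce the whole statement to a single set: $A\cup B=\bigcap_{(u,v)\in T_1\times T_2}\{\mu:\mu(u)=0\text{ or }\mu(v)=0\}$, each member of this intersection is the preimage of $\{x\}'\cup\{y\}'\subseteq\mathrm{Hom}(F(x,y),H)$ under a substitution homomorphism, and preimages and intersections of algebraic sets are algebraic; so it suffices to prove that $\{x\}'\cup\{y\}'$ is algebraic over a domain, the crux again being that a homomorphism $F(x,y)\to H$ killing $\{x\}''\cap\{y\}''$ must annihilate $x$ or $y$.
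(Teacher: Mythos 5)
Your implication ``equational domain $\Rightarrow$ domain'' is complete and is essentially the paper's own sufficiency argument: with $[\mathrm{id}\langle a\rangle,\mathrm{id}\langle b\rangle]=0$, Lemma~\ref{com} puts the point $x\mapsto a$, $y\mapsto b$ into $(A\cup B)''=(\{x\}''\cap\{y\}'')'$ while it clearly lies outside $A\cup B$, so $A\cup B$ is not closed. No complaints there.

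In the converse direction you follow the paper's strategy (the paper proves $A\cup B=(T_1\cap T_2)'$; your candidate $[T_1,T_2]'$ is handled by the very same argument, since the witness it produces lies in $[\mathrm{id}\langle u\rangle,\mathrm{id}\langle v\rangle]\subseteq[T_1,T_2]\subseteq T_1\cap T_2$), but you stop exactly at the decisive step, and the tools you name do not close it. What is needed, for $\mu$ with $\mu(u)=p\neq0$ and $\mu(v)=q\neq0$, is an element of $[\mathrm{id}_F\langle u\rangle,\mathrm{id}_F\langle v\rangle]$ with nonzero image under $\mu$. The paper gets it by the observation $(\mathrm{id}_F\langle u\rangle)^{\mu}=\mathrm{id}\langle p\rangle$ (likewise for $v$): since $H$ is a domain, $[\mathrm{id}\langle p\rangle,\mathrm{id}\langle q\rangle]\neq0$, hence some generator of that ideal --- an ordinary commutator $[p',q']$ or an $\omega$-commutator $[\bar p;\bar q;\omega]$ with entries in $\mathrm{id}\langle p\rangle$, $\mathrm{id}\langle q\rangle$ --- is nonzero, and its entries can be \emph{lifted} to $\mathrm{id}_F\langle u\rangle$, $\mathrm{id}_F\langle v\rangle$, giving $w\in[T_1,T_2]$ with $w^{\mu}\neq0$; one pulls generators back rather than pushing the relation forward, which is how the image-versus-$H$ discrepancy you worry about is dealt with. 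Your proposed substitute --- a second application of Lemma~\ref{com} --- cannot work: that lemma has the vanishing of the commutator as its \emph{hypothesis}, which is precisely what is at issue, and for a commutator word $f\in[X,Y]$ the terms $f(\bar P,\bar 0)$ and $f(\bar 0,\bar Q)$ vanish identically anyway, so the ``reduction to the axes'' is circular and carries no information. Your reduction of the whole problem, via substitution homomorphisms and intersections, to showing that $\{x\}'\cup\{y\}'$ is algebraic over a domain is correct and pleasant, but, as you concede, it reproduces the same crux for the two-variable case (there $\mu(F)$ is just the subalgebra generated by $p,q$, while the domain hypothesis speaks of ideals of $H$). So as written the proposal is incomplete: you have correctly located the hard point --- which is also the one step the paper disposes of rather quickly with ``it is easy to check that $(\mathrm{id}\langle u\rangle)^{\mu}=\mathrm{id}\langle u^{\mu}\rangle$'' --- but a proof has to supply that lifting argument, not merely point at it.
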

\begin{proof}
Necessity has been proved by B. Plotkin (see Theorem 1 in \cite{P}). We present this proof for completeness.

Let $F(X)$ the free $\Omega$-group in $\Theta$ generated by $X$. Suppose that the $\Omega$-group $H$ is a domain. Let us take two algebraic varieties $A$ and $B$ in $V={\rm Hom}(F(X),H)$. Let $T_1$ and $T_2$ be ideals in $F(X)$ corresponding to these varieties. We check that $A\cup B=(T_1\cap T_2)'$. It is obvious that
$$
A\cup B\subseteq(T_1\cap T_2)'
$$
To check the inverse inclusion, it suffices to show that $\mu\notin A\cup B$ implies $\mu\notin (T_1\cap T_2)'$. Since
 $\mu\notin A\cup B$, we have $T_1\notin{\rm Ker}\mu$ and $T_2\notin{\rm Ker}\mu$. Hence, there exist $u\in T_1$ and $v\in T_2$ such that $u^\mu=a\neq 0$ and $v^\mu=b\neq 0$. Since $H$ is the domain,
 $[\rm{id}{\langle a \rangle },\rm{id}{{\langle b \rangle}}]\neq 0$.

  The ideal $[\rm{id}{\langle a \rangle },\rm{id}{{\langle b \rangle}}]$ is generated by $\omega$-commutators
   $$
   c=[a_1,a_2,\dots,a_n; b_1,b_2,\dots,b_n;\omega]
   $$
 and ordinary commutators $[a',b']$, where $a',a_i\in \rm{id}{\langle a \rangle }$ and $b',b_i\in \rm{id}{\langle b \rangle}, i=1,\dots,n$. Hence, there exists a nonzero $\omega$-commutator $c=[a_1,a_2,\dots,a_n; b_1,b_2,\dots,b_n;\omega]$ or a nonzero commutator $[a',b']$, where $a',a_i\in \rm{id}{\langle a \rangle }$ and $b',b_i\in \rm{id}{\langle b \rangle},, i=1,\dots,n$. Let us suppose that such non zero commutator is of the form
 $$
 c=[a_1,a_2,\dots,a_n; b_1,b_2,\dots,b_n;\omega].
 $$
 It is easy to check that
 $(\rm{id}{\langle a \rangle })^\mu=\rm{id}{\langle a^\mu \rangle}$. Hence, we can take $u_1,u_2,\dots,u_n\in \rm{id}_G{\langle u \rangle}$ and $v_1,v_2,\dots,v_n\in \rm{id}_G{\langle v \rangle}$ such that $u_i^\mu=a_i$ and $v_i^\mu=b_i, i=1,\dots,n$. It is clear that
 $$
 w= [u_1,u_2,\dots,u_n; v_1,v_2,\dots,v_n;\omega]\in [\rm{id}{\langle u \rangle},\rm{id}{\langle v \rangle}]
$$
and $w^\mu=c\neq 0$.
 We have
 $$
 [\rm{id}{\langle u \rangle },\rm{id}{{\langle v \rangle}}]\subseteq[T_1,T_2]\subseteq T_1\cap T_2.
 $$
 Finally, we have that $w\in T_1\cap T_2$ and  $w^\mu\neq 0$. Hence, $\mu\notin (T_1\cap T_2)'$ as desired.

 Now we prove the sufficiency. Suppose that $H$ is an equational domain. Assume to the contrary that $H$ is not a domain.  As a consequence, there exist two non-zero elements $a,b$ in $H$ such that
\begin{equation}\label{zd}
[\rm{id}{\langle a \rangle },\rm{id}{{\langle b \rangle}}]=0
\end{equation}
Let $X=\{x,y\}$. Take the free $\Omega$-group $F(X)$. The affine space $V={\rm Hom}(F(X), H)$ over $H$ can be identified with the set $H\times H$. In fact, every point $(h_1,h_2)\in H\times H$ determines the homomorphism $\mu:F(X)\rightarrow H$ such that $\mu(x)=h_1$ and $\mu(y)=h_2$ and vice versa.

 Consider two subvarieties $A$ and $B$ in $V$ defined by the equations $x=0$ and $y=0$, respectively. Since $H$ is an equational domain, $D=A\cup B$ is a subvariety of $V$. Let $f(x,y)$ be an element in $F(X)$ that belongs to $D'$. Then $f(a,0)=f(0,b)=0$. Let $Z=\{X_1,Y_1\}$ be a set in two indeterminates $X_1$ and $Y_1$ and  $F_\Upsilon(Z)$ be the free algebra in the variety $\Upsilon$ of all $\Omega$-groups generated by $Z$. Denote by $f(X_1,Y_1)$ the polynomial in $F_\Upsilon(Z)$ corresponding to $f(x,y)$. It is clear that the values of the polynomials $f(x,y)$ and $f(X_1,Y_1)$ in $H$ are equal. Since the formula (\ref{zd}) is valid, by Lemma \ref{com} we get
\begin{equation}\label{fin}
 f(a,b)=f( a, 0)+f(0,b)=0
\end{equation}
 From (\ref{fin}) we obtain that the point $(a,b)\in D''$. Since $D$ is the algebraic variety, $(a,b)\in D$. On the other hand, since $a$ and $b$ are nonzero elements in $H$, $(a,b)\notin D$. We have arrived at a contradiction. This ends the proof.
\end{proof}
As a consequence, Theorem \ref{main} holds for the variety of all linear algebras, for the variety of all groups, and for the variety of all modules (see \cite{P}, \cite{BPP}, \cite{DMR}). Theorem \ref{main} is also true for the so-called $CD$-variety of $\Omega$-groups (see Theorem 2 in \cite {P}).
\begin{definition}[\cite{L}]
An $\Omega$-group $H$ is called C-anticommutative (completely anticommutative) if each of its nonzero $\Omega$-subgroup is anticommutative.
\end{definition}
 It turns out that the concept of an equational domain and C-anticommutativity are closely related to each other.

In what follows, we use the following Proposition 4 from \cite{P}.
\begin{proposition}\label{P}
An $\Omega$-group $H$ is a domain if and only if $H$ is anticommutative.
\end{proposition}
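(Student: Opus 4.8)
The plan is to deduce both implications directly from the definitions, leaning on just two elementary facts about commutator groups. First, \emph{monotonicity}: if $A_0\subseteq A$ and $B_0\subseteq B$ then $[A_0,B_0]\subseteq[A,B]$, which is immediate because $[A,B]$ is defined as the set of values $f(\bar a,\bar b)$ of commutator words $f\in[X,Y]$. Second, for any two ideals $T_1,T_2$ of $H$ one has $[T_1,T_2]\subseteq T_1\cap T_2$; this is exactly the containment already used in the proof of Theorem~\ref{main}, and it follows from the description of $[T_1,T_2]$ as the ideal of $\{T_1,T_2\}$ generated by ordinary commutators and $\omega$-commutators (the Proposition of Higgins quoted above), combined with the symmetry $[T_1,T_2]=[T_2,T_1]$ of the commutator group and the defining closure properties of an ideal. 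I will also use throughout that $\mathrm{id}\langle a\rangle$ is the smallest ideal of $H$ containing $a$, hence lies inside every ideal containing $a$.

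For sufficiency, assume $H$ is anticommutative and $[\mathrm{id}\langle a\rangle,\mathrm{id}\langle b\rangle]=0$; I want $a=0$ or $b=0$. Suppose instead $a\neq 0$ and $b\neq 0$. Then $\mathrm{id}\langle a\rangle$ and $\mathrm{id}\langle b\rangle$ are nontrivial ideals, so by the second condition in the definition of anticommutativity the ideal $I=\mathrm{id}\langle a\rangle\cap\mathrm{id}\langle b\rangle$ is nontrivial. By monotonicity, $[I,I]\subseteq[\mathrm{id}\langle a\rangle,\mathrm{id}\langle b\rangle]=0$, so $I$ is a nontrivial abelian ideal, contradicting the first condition. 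Hence $H$ is a domain.

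For necessity, assume $H$ is a domain. To verify the first anticommutativity condition, let $U$ be an abelian ideal and take $a\in U$; then $\mathrm{id}\langle a\rangle\subseteq U$, so $[\mathrm{id}\langle a\rangle,\mathrm{id}\langle a\rangle]\subseteq[U,U]=0$, and the domain property applied with $g_1=g_2=a$ gives $a=0$; thus $U=0$. To verify the second condition, let $H_1,H_2$ be ideals with $H_1\cap H_2=0$; then $[H_1,H_2]\subseteq H_1\cap H_2=0$ by the second fact above. For any $a\in H_1$ and $b\in H_2$ we have $\mathrm{id}\langle a\rangle\subseteq H_1$ and $\mathrm{id}\langle b\rangle\subseteq H_2$, hence $[\mathrm{id}\langle a\rangle,\mathrm{id}\langle b\rangle]\subseteq[H_1,H_2]=0$ by monotonicity, and the domain property forces $a=0$ or $b=0$; so if $H_1\neq 0$ then $H_2=0$, i.e.\ one of $H_1,H_2$ is trivial.

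I do not expect a genuine obstacle here: the proof is a direct unwinding of definitions. The one point that deserves care is the second auxiliary fact — that the commutator group of two ideals lies in their intersection — and in particular applying the $\omega$-commutator closure axiom to the correct argument on each side, which is precisely where the symmetry $[T_1,T_2]=[T_2,T_1]$ is needed. One should also confirm that "abelian ideal $I$" is read as $[I,I]=0$ with the commutator computed in $H$ (equivalently in $I$, since $I$ is an $\Omega$-subgroup), so that the containment $[I,I]\subseteq[\mathrm{id}\langle a\rangle,\mathrm{id}\langle b\rangle]$ is literally meaningful.
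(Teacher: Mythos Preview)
Your argument is correct: both directions follow cleanly from the two auxiliary facts you isolate (monotonicity of the commutator group and $[T_1,T_2]\subseteq T_1\cap T_2$ for ideals), and your handling of the symmetry $[T_1,T_2]=[T_2,T_1]$ to get the containment in \emph{both} ideals is exactly the point that needs care.

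There is nothing to compare against in this paper: the proposition is quoted from \cite{P} (as ``Proposition~4'') and is used here without proof. Your self-contained derivation is therefore a genuine addition rather than a reworking of anything the paper supplies.
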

We now give a useful criterion for an $\Omega$-group $H$ in a variety $\Theta$ to be an equational domain in this variety.
 \begin{proposition}\label{antic}
Every non-trivial $\Omega$-group $H$ in $\Theta$ is an equational domain in $\Theta$ if and only if $H$ is C-anticommutative.
\end{proposition}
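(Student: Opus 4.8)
The plan is to route everything through the two equivalences already in hand: Theorem \ref{main} identifies ``equational domain in $\Theta$'' with ``domain,'' and Proposition \ref{P} identifies ``domain'' with ``anticommutative.'' Throughout I would therefore freely interchange ``equational domain'' and ``anticommutative.'' The only genuinely new ingredient is that a variety is closed under the formation of $\Omega$-subgroups: every nonzero $\Omega$-subgroup $P$ of a member of $\Theta$ is again a non-trivial $\Omega$-group in $\Theta$, so any property known for members of $\Theta$ is automatically available for such $P$. This closure is what lets me move between a group and its subgroups, which is the whole gap between anticommutativity (a statement about $H$ alone) and C-anticommutativity (the same statement for all nonzero $\Omega$-subgroups of $H$).

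I would dispatch the implication starting from C-anticommutativity first, since it needs no passage to proper subgroups. If $H$ is C-anticommutative then, applying the definition to the nonzero $\Omega$-subgroup $H$ itself, $H$ is anticommutative; Proposition \ref{P} then makes $H$ a domain and Theorem \ref{main} makes it an equational domain in $\Theta$. This direction is thus immediate and pointwise, invoking the hypotheses only for $H$.

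For the converse I would fix an arbitrary nonzero $\Omega$-subgroup $P$ of $H$ and show it is anticommutative, since C-anticommutativity of $H$ is exactly the assertion that every such $P$ is anticommutative. The mechanism is: first observe that $P$ is a non-trivial member of $\Theta$ by subalgebra-closure; then apply the equational-domain hypothesis to the particular group $P$; finally read off from Theorem \ref{main} and Proposition \ref{P} that $P$ is a domain, hence anticommutative. Letting $P$ range over all nonzero $\Omega$-subgroups yields C-anticommutativity of $H$.

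I expect the crux, and the step most easily gotten wrong, to be precisely this passage to subgroups. The domain (equivalently equational-domain) property does not descend from $H$ to a subgroup $P$ on its own: the ideal $\mathrm{id}_P\langle p\rangle$ generated inside $P$ is in general strictly smaller than $\mathrm{id}_H\langle p\rangle$, so an identity $[\mathrm{id}_P\langle p_1\rangle,\mathrm{id}_P\langle p_2\rangle]=0$ witnessing a zero divisor in $P$ need not persist as $[\mathrm{id}_H\langle p_1\rangle,\mathrm{id}_H\langle p_2\rangle]=0$ in $H$. For this reason the argument must not try to pull the domain property of $H$ down to $P$; it must instead invoke the hypothesis for the particular group $P$, whose legitimacy rests entirely on $P$ being a member of $\Theta$. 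Once that observation is in place, both implications close up through Theorem \ref{main} and Proposition \ref{P}.
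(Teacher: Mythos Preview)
Your forward direction (C-anticommutative $\Rightarrow$ equational domain) is correct, and in fact slightly cleaner than the paper's argument: you simply note that $H$ is a nonzero $\Omega$-subgroup of itself, hence anticommutative, hence a domain by Proposition~\ref{P}, hence an equational domain by Theorem~\ref{main}.

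The converse, however, has a genuine gap. The statement is about a \emph{fixed} non-trivial $\Omega$-group $H$: the hypothesis for that direction is only that this particular $H$ is an equational domain. You cannot ``apply the equational-domain hypothesis to the particular group $P$''; nothing in the hypothesis tells you that an arbitrary $\Omega$-subgroup $P$ of $H$ is an equational domain. Your own observation that the domain property need not descend to subgroups is exactly the obstruction, and the proposed workaround---invoking the hypothesis for $P$ because $P\in\Theta$---is not legitimate: membership in $\Theta$ does not put $P$ under the scope of the hypothesis.

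The paper closes this gap without ever asserting that a subgroup is an equational domain. It argues by contradiction: if some $\Omega$-subgroup $H_1\subseteq H$ fails to be anticommutative, pick nonzero $a,b\in H_1$ with $[\mathrm{id}_{H_1}\langle a\rangle,\mathrm{id}_{H_1}\langle b\rangle]=0$. Work in the affine space $\mathrm{Hom}(F(x,y),H)$ over $H$ (not over $H_1$), and let $A,B$ be the varieties $x=0$, $y=0$. Since $H$ is an equational domain, $D=A\cup B$ is algebraic, so $D=D''$. But every $f\in D'$ satisfies $f(a,0)=f(0,b)=0$, and Lemma~\ref{com} applied to the subsets $\mathrm{id}_{H_1}\langle a\rangle$ and $\mathrm{id}_{H_1}\langle b\rangle$ gives $f(a,b)=f(a,0)+f(0,b)=0$; hence the point $(a,b)\in D''\setminus D$, a contradiction. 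The crucial point is that the commuting ideals live inside $H_1$, but the point $(a,b)$ is tested against the closure operator of $H$.
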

 \begin{proof}
 Suppose that the $\Omega$-group $H$ is C-anticommutative. Let us take two nonzero elements $a$ and $b$ in $H$.
 Denote by $P$ the $\Omega$-subgroup of $H$ generated by the elements $a$ and $b$. Since $H$ is C-anticommutative, $P$ is anticommutative.  By Proposition \ref{P}, $P$ is without zero-divisors, i.e.,
  $[\rm{id}_P{\langle a \rangle},\rm{id}_P{\langle b\rangle}]\neq 0$. From this, it follows that
$[{\langle a \rangle},{\langle b\rangle}]\neq 0$. Therefore,
$[\rm{id}{\langle a \rangle}, \rm{id}{\langle b\rangle}]\neq 0$, i.e., $H$ is a domain. By Theorem \ref{main}, $H$ is an equational domain in $\Theta$.

 Now suppose that the $\Omega$-group $H$ is an equational domain. Let us show that it is C-anticommutative. Assume to the contrary, that $H$ is not $C$-anticommutative. Therefore, there exists an $\Omega$-subgroup $H_1$ of $H$ which is not anticommutative. Hence, there exist two non-zero elements $a,b\in H_1$ such that
\begin{equation}\label{eq_3}
[\rm{id}_{H_1}{\langle a \rangle},\rm{id}_{H_1}{\langle b \rangle}]=0
\end{equation}
 Take the free $\Omega$-group $F=F(X)$ generated by $X=\{x,y\}$. Denote by $V={\rm Hom}(F(X), H)$ the affine space over $H$.
 Consider two subvarieties $A$ and $B$ in $V$ defined by the equations $x=0$ and $y=0$, respectively. Since the $\Omega$-group $H$ is an equational domain, $D=A\cup B$ is a subvariety of $V$. Denote by $V_1={\rm Hom}(F(X), H_1)$ the subvariety of $V$ in the induced Zariski topology. Let $A_1$ and $B_1$ be the subvarieties of $V_1$ defined by the equations $x=0$ and $y=0$, respectively. Then we have
  $$
 A_1=V_1\bigcap A \,\, \mbox{and}\,\, B_1=V_1\bigcap B.
 $$
Since $D$ is subvarieties of $V$, $D_1=A_1\bigcup B_1$ is a subvariety of $V_1$. However, the same arguments given in the proof of the sufficiency of the conditions of Theorem \ref{main} show that $D_1$ is not a variety. We have a contradiction. This ends the proof.
\end{proof}

 \begin{corollary}\label{ass}
An $\Omega$-group $H$ in $\Theta$ is C-anticommutative if and only if for any non-zero elements $a$ and $b$ in $H$,
$[{\langle a \rangle},{\langle b \rangle}]\neq 0$.
\begin{proof}
Suppose that the elements $a$ and $b$ in $H$ satisfy the above condition. Therefore, $[\rm{id}{\langle a \rangle},\rm{id}{\langle b\rangle}]\neq 0$. Hence, $H$ is without zero divisors, i.e., $H$ is a domain. By Theorem \ref{main}, it is an equational domain. According to Proposition \ref{antic}, $H$ is C-anticommutative.

Let $H$ be C-anticommutative. Let $P$ be the $\Omega$-subgroup of $H$ generated by two non-zero elements $a$ and $b$ of $H$. Since $H$ is C-anticommutative, $P$ is anticommutative. By Proposition \ref{P}, $P$ is without zero-divisors, i.e.,
  $[\rm{id}_P{\langle a \rangle},\rm{id}_P{\langle b\rangle}]\neq 0$.  It follows that $[{\langle a \rangle},{\langle b\rangle}]\neq 0$ as desired.
\end{proof}
\end{corollary}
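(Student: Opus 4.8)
I would prove the two implications of this biconditional separately, leaning on the machinery already assembled.

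\emph{``If''.} Assume $[\langle a\rangle,\langle b\rangle]\ne 0$ for all non-zero $a,b\in H$. Since $\langle a\rangle\subseteq\mathrm{id}\langle a\rangle$ and $\langle b\rangle\subseteq\mathrm{id}\langle b\rangle$, the characterization of the commutator group as the ideal generated by the ordinary and $\omega$-commutators of its two arguments gives $[\langle a\rangle,\langle b\rangle]\subseteq[\mathrm{id}\langle a\rangle,\mathrm{id}\langle b\rangle]$, so the latter is non-zero as well. Thus $H$ has no zero divisors, i.e.\ $H$ is a domain; by Theorem~\ref{main} it is an equational domain in $\Theta$, and by Proposition~\ref{antic} it is C-anticommutative. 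This half is pure bookkeeping.

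\emph{``Only if''.} Assume $H$ is C-anticommutative and fix non-zero $a,b\in H$; let $P$ be the $\Omega$-subgroup of $H$ generated by $a$ and $b$. Then $P$ is anticommutative, hence a domain by Proposition~\ref{P}, so $[\mathrm{id}_P\langle a\rangle,\mathrm{id}_P\langle b\rangle]\ne 0$. It remains to pass from these two ideals of $P$ down to the $\Omega$-subgroups $\langle a\rangle$ and $\langle b\rangle$, where the evident inclusion runs the wrong way. I would do this by contraposition inside $P$: suppose $[\langle a\rangle,\langle b\rangle]=0$. By Lemma~\ref{com} every word in elements of $\langle a\rangle$ and of $\langle b\rangle$ splits as an $\langle a\rangle$-part plus a $\langle b\rangle$-part, so $P$ is the set of sums $\langle a\rangle+\langle b\rangle$; a short computation --- in which the conjugation terms $-\beta+\alpha+\beta$ and the $\omega$-commutator corrections are themselves ordinary or $\omega$-commutators of $\langle a\rangle$- and $\langle b\rangle$-elements, hence vanish by Lemma~\ref{com} and the hypothesis --- shows that $\langle a\rangle$ and $\langle b\rangle$ are in fact ideals of $P$. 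Then $\mathrm{id}_P\langle a\rangle=\langle a\rangle$ and $\mathrm{id}_P\langle b\rangle=\langle b\rangle$, so $[\mathrm{id}_P\langle a\rangle,\mathrm{id}_P\langle b\rangle]=0$, contradicting that $P$ is a domain. Hence $[\langle a\rangle,\langle b\rangle]\ne 0$, as required.

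The crux --- and the one step not reducible to citing Theorem~\ref{main}, Proposition~\ref{P}, Proposition~\ref{antic}, or Lemma~\ref{com} --- is exactly this last descent: the vanishing of $[\langle a\rangle,\langle b\rangle]$ must be shown to force $\langle a\rangle$ and $\langle b\rangle$ to be ideals of $P$. That is where Lemma~\ref{com} does the real work, eliminating precisely the conjugation and $\omega$-commutator terms that normally distinguish a generated $\Omega$-subgroup from the ideal it generates. If one preferred to sidestep Proposition~\ref{P}, the same ideal identification produces the contradiction directly: either $\langle a\rangle\cap\langle b\rangle=0$, giving two non-zero ideals of $P$ with trivial intersection, or $\langle a\rangle\cap\langle b\rangle$ contains a non-zero $c$, and then $\mathrm{id}_P\langle c\rangle\subseteq\langle a\rangle\cap\langle b\rangle$ is a non-zero ideal of $P$ with $[\mathrm{id}_P\langle c\rangle,\mathrm{id}_P\langle c\rangle]\subseteq[\langle a\rangle,\langle b\rangle]=0$, i.e.\ a non-zero abelian ideal --- either way contradicting anticommutativity of $P$.
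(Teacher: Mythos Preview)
Your proof is correct and follows the same overall route as the paper: both directions invoke Theorem~\ref{main}, Proposition~\ref{antic}, and Proposition~\ref{P} in exactly the same way, and in the ``only if'' direction both pass to the $\Omega$-subgroup $P$ generated by $a$ and $b$ and use that $P$ is a domain to get $[\mathrm{id}_P\langle a\rangle,\mathrm{id}_P\langle b\rangle]\neq 0$.

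The one substantive difference is that the paper simply writes ``It follows that $[\langle a\rangle,\langle b\rangle]\neq 0$'' for the final descent, while you actually supply an argument: assuming $[\langle a\rangle,\langle b\rangle]=0$, you use Lemma~\ref{com} to show $\langle a\rangle$ and $\langle b\rangle$ are already ideals of $P$, whence $\mathrm{id}_P\langle a\rangle=\langle a\rangle$ and $\mathrm{id}_P\langle b\rangle=\langle b\rangle$, giving the contradiction. This is correct and is a genuine contribution, since the obvious inclusion $[\langle a\rangle,\langle b\rangle]\subseteq[\mathrm{id}_P\langle a\rangle,\mathrm{id}_P\langle b\rangle]$ points the wrong way. (An alternative justification, perhaps what the paper has in mind, is a word argument: every element of $\mathrm{id}_P\langle a\rangle$ has the form $g(a,b)$ with $g(0,y)=0$ identically, and similarly for $b$; composing with a commutator word $f$ yields a word $w(x,y)=f(\bar g(x,y),\bar h(x,y))$ with $w(x,0)=w(0,y)=0$, so $[\mathrm{id}_P\langle a\rangle,\mathrm{id}_P\langle b\rangle]\subseteq[\langle a\rangle,\langle b\rangle]$ and the two commutator groups actually coincide.) Either way, your proof is complete where the paper's is terse. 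Your final paragraph giving a direct contradiction with anticommutativity, bypassing Proposition~\ref{P}, is a pleasant bonus.
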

\begin{example}\label{ex_1}
 Now consider some examples of $\Omega$-groups that are not equational domains in varieties related to them.
 \begin{enumerate}
\item Every non-trivial module $M$ over an associative commutative ring $K$ with unit is not an equational domain in the variety of all modules over $K$, since it is abelian, i.e., $[M,M]=0$.

\item Every nontrivial soluble $\Omega$-group $G$ is not an equational domain in the variety $\Upsilon$ of all $\Omega$-groups.

Indeed, $G$ has a nontrivial abelian $\Omega$-subgroup. As a consequence, $G$ is not C-anticommutative.

The following examples were considered earlier in \cite{DMR}. However, using Proposition \ref{antic} and Corollary \ref{ass}, in contrast to the paper \cite{DMR}, we prove all statements (3)-(5) in a unified way.

 \item Every non-trivial group $G$ is not an equational domain in the variety of all groups.

  Indeed, $G$ contains a non-trivial cyclic group which is not anticommutative. Therefore, $G$ is not C-anticommutative.  By Proposition \ref{antic}, $G$ is not an equational domain.

\item Every non-trivial Lie algebra $L$ over an associative commutative ring $K$ is not an equational domain in the variety of all Lie algebras.

    In fact, $L$ contains a non-trivial cyclic subalgebra which is also not anticommutative. By Proposition \ref{antic}, $L$ is not an equational domain.

Denote by $L_{ring}=\{+,-,\cdot,0\}$ the language of associative rings. In the formulation of the following assertion, we use the language $L_{ring}$.

 \item An associative ring $A$ is an equational domain if and only if $A$ satisfies the formula:
  \begin{equation}\label{div}
   \forall x,y ((xy=yx=0)\Rightarrow [(x=0) \vee (y=0)]).
   \end{equation}
   \end{enumerate}

   Indeed, suppose that $A$ is an equational domain. By Proposition \ref{antic}, $A$ is C-anticommutative. Let $ab=ba=0$ for some elements $a$ and $b$ in $A$. From this it follows that
$$
   [{\langle a \rangle},{\langle b \rangle}]= 0.
$$
By Corollary \ref{ass}, $a=0$  or $b=0$.

   Conversely, assume that formula (\ref{div}) is true in $A$. Then for every non-zero elements $a$ and $b$ in $A$,
   $[{\langle a \rangle},{\langle b \rangle}]\neq 0$. By Corollary \ref{ass}, $A$ is C-anticommutative. According to Proposition \ref{antic}, $A$ is an equational domain.
   \end{example}

 \section{Acknowledgments}
I would like to thank Prof. B. I. Plotkin and Prof. E. B. Plotkin for useful suggestions and comments on this paper.

\bibliographystyle{amsalpha}

\end{document}